\theoremstyle{plain}
\newtheorem*{theorem*}{Theorem}
\newtheorem{theorem}{Theorem}[section] 
\newtheorem{lemma}[theorem]{Lemma}
\newtheorem{proposition}[theorem]{Proposition}
\theoremstyle{definition}
\newtheorem{remark}[theorem]{Remark}
\numberwithin{equation}{section}
\DeclareMathOperator{\Wr}{Wr}
\DeclareMathOperator{\He}{He}
\DeclareMathOperator{\Id}{Id}
\DeclareMathOperator{\sgn}{sgn}
\DeclareMathOperator{\htt}{ht}
\title{Recurrence relations for Wronskian Laguerre polynomials}
\author[1]{Niels Bonneux}
\author[2]{Marco Stevens}
\affil[1,2]{KU Leuven, Department of Mathematics, 
	
	Celestijnenlaan~200B box 2400, 3001 Leuven, Belgium. 
	
	E-mail:~{\tt niels.bonneux@kuleuven.be} and {\tt marco.stevens@kuleuven.be}
}
\date{\today}                     
\begin{document}
\maketitle

\begin{abstract}
The 3-term recurrence relation for Hermite polynomials was recently generalized to a recurrence relation for Wronskians of Hermite polynomials. In this note, a similar generalization for Laguerre polynomials is obtained.
\end{abstract}

\section{Introduction}
Hermite and Laguerre polynomials are well-studied classical orthogonal polynomials and can be defined via their 3-term recurrence relation~\cite{Szego}. Applying the Wronskian operator to a finite set of these polynomials yields Wronskian Hermite and Wronskian Laguerre polynomials. They appear in rational solutions of Painlev\'e equations, see for example~\cite{Clarkson-survey,VanAssche} and the references therein, and play a key role in the theory of exceptional orthogonal polynomials~\cite{Bonneux_Kuijlaars,Duran-Laguerre,GomezUllate_Grandati_Milson}.

A recurrence relation for Wronskian Hermite polynomials was derived in~\cite{Bonneux_Stevens} by direct computation of determinants. It generalizes the classical 3-term recurrence relation of Hermite polynomials. Subsequently, these results were shown to hold in general for Wronskians of Appell polynomials in~\cite{Bonneux_Hamaker_Stembridge_Stevens}, by using a connection with the theory of symmetric functions.

Modified Laguerre polynomials satisfy the Appell property and hence specifying the results in~\cite{Bonneux_Hamaker_Stembridge_Stevens} to this case yields a recurrence relation for Wronskian Laguerre polynomials. Remarkably, this relation is not a generalization of the 3-term recurrence relation for Laguerre polynomials. However, following the route of direct computations of determinants that was used in~\cite{Bonneux_Stevens}, a different recurrence relation is obtained and this is a proper generalization of the 3-term recurrence relation. It expresses Wronskian Laguerre polynomials of degree $n$ in terms of Wronskian Laguerre polynomials of degrees $n-1$ and $n-2$. This note is dedicated to this new recurrence relation, which is stated in Theorem~\ref{thm:WLrecurrence2}, and cannot be obtained from symmetric function theory as in~\cite{Bonneux_Hamaker_Stembridge_Stevens}.

The rest of this section discusses the conventions of this note: Laguerre polynomials, the basics of the theory of partitions~\cite{Macdonald,Stanley_EC2}, the definition of Wronskian Laguerre polynomials, the main result Theorem~\ref{thm:WLrecurrence2}, a comparison with the result for exceptional Laguerre polynomials and with the result for Wronskian Hermite polynomials in~\cite{Bonneux_Stevens}, and a remark why the main result does not extend to Wronskian Jacobi polynomials. Subsequently, the proof of Theorem~\ref{thm:WLrecurrence2} is given in Section~\ref{sec:proof}. The last section contains an alternative proof of the averaging property of Wronskian Laguerre polynomials with respect to the Plancherel measure. In contrast with the proof given in~\cite{Bonneux_Hamaker_Stembridge_Stevens}, Section~\ref{sec:average} does not make use of the theory of symmetric functions. Instead, it is based on an inductive argument using Theorem~\ref{thm:WLrecurrence2}.

\subsection{Laguerre polynomials}
Laguerre polynomials with parameter $\alpha$ can be defined by their 3-term recurrence relation
\begin{equation*}
	L_{n}^{(\alpha)}(x)
	= \left(2+\frac{-x+\alpha-1}{n}\right)L_{n-1}^{(\alpha)}(x) - \left(1+\frac{\alpha-1}{n}\right)L_{n-2}^{(\alpha)}(x)
\end{equation*}
for $n\geq 2$, together with the initial conditions $L_{0}^{(\alpha)}(x)=1$ and $L_{1}^{(\alpha)}(x)=-x+\alpha+1$, see formula~(5.1.10) in~\cite{Szego}. Using elementary identities for Laguerre polynomials, i.e., formulas~(5.1.13) and~(5.1.14) from~\cite{Szego}, it is easy to show that Laguerre polynomials satisfy
\begin{equation}\label{eq:Lrecurrencebis}
	n L_{n}^{(\alpha)}(x)
	= (-x+\alpha+1) L_{n-1}^{(\alpha+1)}(x) - x \, L_{n-2}^{(\alpha+2)}(x)
\end{equation} 
for all $n\geq 2$. Therefore setting
\begin{equation*}
	l_{n}^{(\alpha)}(x)
	:= n! \, L_{n}^{(\alpha-n)}(-x)
\end{equation*}
yields that~\eqref{eq:Lrecurrencebis} can be written as
\begin{equation}\label{eq:Lrecurrence2}
	l_{n}^{(\alpha)}(x)
	= (x+\alpha-n+1) \, l_{n-1}^{(\alpha)}(x)  + x \, (n-1) \,l_{n-2}^{(\alpha)}(x)
\end{equation}
for all $n\geq 2$, together with $l_{0}^{(\alpha)}(x)=1$ and $l_{1}^{(\alpha)}(x)=x+\alpha$. Moreover, $l_{n}^{(\alpha)}$ is a monic polynomial of degree $n$ and the sequence $(l_n^{(\alpha)})_{n=0}^\infty$ satisfies the Appell property, that~is
\begin{equation}\label{eq:LAppell}
	\frac{d}{dx}l_{n}^{(\alpha)}(x)
	= n \, l_{n-1}^{(\alpha)}(x)
\end{equation}
for all $n\geq1$. It is noteworthy that even though the transformation from the sequence of polynomials $L_n^{(\alpha)}$ to $\ell_n^{(\alpha)}$ is a modification of the Laguerre polynomials, it are precisely these modified Laguerre polynomials that appear in the rational solutions of the third and fifth Painlev\'e equation, see~\cite{Clarkson-survey,VanAssche} and the references therein.

\subsection{Partitions and degree vectors}
Wronskian Laguerre polynomials are, similarly as in~\cite{Bonneux_Hamaker_Stembridge_Stevens,Bonneux_Kuijlaars}, labelled by (integer) partitions. A brief overview of the necessary notions of the theory of partitions is given below. For a slightly more extended discussion (using the same notation), the reader is referred to~\cite{Bonneux_Hamaker_Stembridge_Stevens}, and for a thorough introduction to~\cite{Macdonald,Stanley_EC2}.

A partition is a finite sequence $\lambda=(\lambda_1,\lambda_2,\dots,\lambda_r)$ of strictly positive integers such that $\lambda_1\geq \lambda_2 \geq \dots \geq \lambda_r>0$. The length of~$\lambda$ is $\ell(\lambda)=r$ and the size is $\lvert \lambda \rvert = \sum_{i} \lambda_i$. If $\lvert \lambda \rvert = n$, then $\lambda \vdash n$. Each partition can be identified with its Young diagram $D_{\lambda}$, which consists of~$r$ rows of boxes, and row $i$ contains $\lambda_i$ boxes. The set of all partitions $\mathbb{Y}$ is partially ordered by $\mu \leq \lambda$ if the Young diagram of~$\mu$ fits within the Young diagram of~$\lambda$, or equivalently, if $\mu_i \leq \lambda_i$ for all $i=1,2,\dots,\ell(\mu)$. This partial ordering turns the set $\mathbb{Y}$ into a lattice, called the Young lattice.

If $\mu\leq\lambda$, then the difference of Young diagrams $D_{\lambda}\setminus D_{\mu}$ defines the skew partition $\lambda / \mu$, and $\lvert \lambda /\mu \rvert := \lvert \lambda \rvert - \lvert \mu \rvert$. In the case that $\lambda /\mu$ is a border strip of size $k$~\cite[I.1]{Macdonald}, that is, it is connected and does not contain a $2\times 2$-square, then $\mu \in \mathcal{R}_{k}^-(\lambda)$, and $\htt(\lambda/\mu)$ denotes the number of rows minus 1. Furthermore, $\lambda \in \mathcal{R}_{k}^+(\mu)$ if and only if $\mu \in \mathcal{R}_{k}^-(\lambda)$. In the special case that $\lvert \lambda /\mu \rvert =1$, the notation $\mu\lessdot\lambda$ (or equivalently $\lambda\gtrdot\mu$) is used. In this case, the Young diagram of~$\mu$ is obtained by removing precisely one box from the Young diagram of~$\lambda$. If this box is in the $i^\textrm{th}$ row, then
\begin{equation}
	\label{eq:defcontent}
	c(\lambda/\mu)=\lambda_i-i
\end{equation}
denotes the content of the skew partition. This is in correspondence with the general notion of contents of partitions~\cite{Macdonald,Stanley_EC2}. If $\lambda / \mu$ is a border strip of size 2, then the Young diagram of~$\mu$ is obtained from the Young diagram of~$\lambda$ by removing a domino tile. If this domino tile is horizontally placed, then $\htt(\lambda/\mu)=0$ and if it is vertically placed, then $\htt(\lambda/\mu)=1$. This also corresponds to the assignments of signs in~\cite{Bonneux_Stevens}.

Any partition $\lambda=(\lambda_1,\lambda_2,\dots,\lambda_r)$ has a degree vector ${n_\lambda=(n_1,n_2,\dots,n_r)}$ associated to it, defined by $n_i=\lambda_i+r-i$ for all $i=1,2,\dots,r$. Note that a partition is a weakly decreasing sequence while its degree vector is strictly decreasing. 

The number of directed paths in the Young lattice from $\emptyset$ (the unique partition of size~$0$) to a partition $\lambda$, denoted by $F_{\lambda}$, can be written in terms of the degree vector~as
\begin{equation}
	\label{eq:Flambda}
	F_\lambda= \frac{\lvert \lambda \rvert! \prod_{i<j} (n_i-n_j)}{\prod_{i} n_i!}
\end{equation}
and is equal to the number of standard Young tableaux of shape $\lambda$~\cite[Corollary~3.31]{Baik_Deift_Suidan}, as well as to the dimension of the irreducible representation of the symmetric group associated to $\lambda$.

\subsection{Wronskian Laguerre polynomials}
Following the convention of~\cite{Bonneux_Hamaker_Stembridge_Stevens} and~\cite{Bonneux_Stevens}, the Wronskian Laguerre polynomial with parameter $\alpha$ associated to the partition $\lambda=(\lambda_1,\lambda_2,\dots,\lambda_r)$ is defined by
\begin{equation}
	\label{eq:WLP}
	l_{\lambda}^{(\alpha)} = \frac{\Wr[l_{n_1}^{(\alpha)}, l_{n_2}^{(\alpha)}, \dots, l_{n_r}^{(\alpha)}]}{\prod_{i<j} (n_j-n_i)}
\end{equation}
where the (modified) Laguerre polynomials $l_n^{(\alpha)}$ are defined by~\eqref{eq:Lrecurrence2} and ${n_\lambda=(n_1,n_2,\dots,n_r)}$ is the degree vector of~$\lambda$. From the definition it immediately follows that $l_\lambda^{(\alpha)}$ is a monic polynomial of degree $\lvert \lambda \rvert$. Taking the trivial partition $\lambda=(n)$ yields $l_{(n)}^{(\alpha)}=l_n^{(\alpha)}$, and so the Wronskian Laguerre polynomials are a generalization of the Laguerre polynomials. The main result of this note, which is proven in Section~\ref{sec:proof}, is the following recurrence relation for the Wronskian polynomials.
\begin{theorem}\label{thm:WLrecurrence2}
	If $\lambda$ is a non-empty partition, then
	\begin{equation}\label{eq:WLrecurrence2}
		F_{\lambda} l_{\lambda}^{(\alpha)}(x)
		= \sum_{\mu \lessdot \lambda} (x+\alpha-c(\lambda/\mu)) F_{\mu} l_{\mu}^{(\alpha)}(x)
		\, \,  +  \, \,  x (|\lambda|-1) \sum_{\rho \in \mathcal{R}_2^{-}(\lambda)}(-1)^{\htt(\lambda/\rho)} F_{\rho} l_{\rho}^{(\alpha)}(x)
	\end{equation}
	where $c(\lambda/\mu)$ is the content defined in~\eqref{eq:defcontent}.
\end{theorem}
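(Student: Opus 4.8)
The plan is to prove the recurrence by direct manipulation of Wronskian determinants, following the strategy of~\cite{Bonneux_Stevens} adapted to the Laguerre setting. The starting point is the three-term recurrence~\eqref{eq:Lrecurrence2}, namely $l_{n}^{(\alpha)}(x) = (x+\alpha-n+1)\, l_{n-1}^{(\alpha)}(x) + x(n-1)\, l_{n-2}^{(\alpha)}(x)$, together with the Appell property~\eqref{eq:LAppell}. Writing $W = \Wr[l_{n_1}^{(\alpha)},\dots,l_{n_r}^{(\alpha)}]$ with $n_\lambda = (n_1,\dots,n_r)$ the degree vector of $\lambda$, the idea is to expand this determinant by substituting the recurrence into each entry of the \emph{last} column of the Wronskian matrix (the column of highest derivatives), or alternatively into each row, and then to recognize the resulting determinants as (scalar multiples of) Wronskians attached to partitions obtained from $\lambda$ by removing one or two boxes.

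First I would fix notation: the Wronskian matrix has $(i,j)$-entry $\frac{d^{j-1}}{dx^{j-1}} l_{n_i}^{(\alpha)}(x)$ for $1\le i,j\le r$. The key observation is that applying the Appell relation~\eqref{eq:LAppell} repeatedly gives $\frac{d^{k}}{dx^{k}} l_{n}^{(\alpha)}(x) = \frac{n!}{(n-k)!}\, l_{n-k}^{(\alpha)}(x)$, so every entry is a scalar multiple of some $l_{m}^{(\alpha)}$. The plan is then to use the recurrence~\eqref{eq:Lrecurrence2} to express $l_{n_i}^{(\alpha)}$ (and its derivatives) in terms of polynomials of lower index, producing three groups of terms: a term with factor $(x+\alpha-n_i+1)$ multiplying a "shift by one" in row $i$, and a term with factor $x(n_i-1)$ multiplying a "shift by two" in row $i$. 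Multilinearity of the determinant in its rows then lets me write $W$ as a sum over $i$ of such modified determinants. The combinatorial heart is to collect these modified determinants: shifting $n_i$ down by one in the degree vector, when it keeps the vector strictly decreasing, corresponds exactly to removing a single box from $\lambda$ (giving a $\mu\lessdot\lambda$), and shifting by two corresponds to removing a vertical or horizontal domino (giving $\rho\in\mathcal R_2^-(\lambda)$), with the sign $(-1)^{\htt(\lambda/\rho)}$ emerging from the reordering of rows needed to restore the standard decreasing order; the terms where strict decrease fails produce repeated rows and vanish, while $x$-dependent prefactors $(x+\alpha-n_i+1)$ must be rewritten in terms of the content $c(\lambda/\mu) = \lambda_i - i$ using $n_i = \lambda_i + r - i$.

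After this expansion, I would divide through by the Vandermonde-type normalization $\prod_{i<j}(n_j-n_i)$ appearing in~\eqref{eq:WLP}, and compare the normalization factors of $W$, of each $\mu\lessdot\lambda$, and of each $\rho\in\mathcal R_2^-(\lambda)$. Since $F_\lambda$ is given by~\eqref{eq:Flambda} in terms of the degree vector, the ratios $F_\lambda/F_\mu$ and $F_\lambda/F_\rho$ will absorb precisely the leftover factors (including the $|\lambda|!$ versus $(|\lambda|-1)!$ discrepancy, which accounts for the $(|\lambda|-1)$ factor in the domino sum, and the various $n_i!$ and difference factors). This bookkeeping step — tracking exactly how the combinatorial prefactors $\frac{n!}{(n-k)!}$ from the Appell relation, the reordering signs, and the Vandermonde normalizations combine — is where I expect the main obstacle to lie: it is entirely routine in principle but error-prone, and getting the coefficient $x(|\lambda|-1)$ and the content shift $x+\alpha-c(\lambda/\mu)$ exactly right requires careful handling of the index arithmetic $n_i = \lambda_i + r - i$ and of how removing a box changes $r$. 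A subtlety to watch is the boundary behavior when $n_r = 1$ or $n_r = 0$ (i.e.\ when shifting the last row down hits a repeated index or a negative index, in which case $l_{-1}^{(\alpha)}$ should be read as $0$ and the corresponding term simply drops), and when $r=1$, where the statement must reduce to~\eqref{eq:Lrecurrence2} itself — checking this base case both validates the normalization and illustrates why the $x(|\lambda|-1)$ coefficient appears (there $|\lambda|-1 = n-1$).
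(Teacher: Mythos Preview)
Your overall plan is in the right spirit, but there is a genuine technical gap in the step where you propose to pull out the factors $(x+\alpha-n_i+1)$ and $x(n_i-1)$ from row~$i$ by multilinearity. When you apply the recurrence~\eqref{eq:Lrecurrence2} to the $(i,j)$ entry of the Wronskian matrix, that entry is (by the Appell property) a constant multiple of $l_{n_i-j+1}^{(\alpha)}$, so the relevant instance of the recurrence reads
\[
l_{n_i-j+1}^{(\alpha)}(x) = (x+\alpha - n_i + j)\, l_{n_i-j}^{(\alpha)}(x) + x(n_i-j)\, l_{n_i-j-1}^{(\alpha)}(x).
\]
Both coefficients depend on the column index $j$, not only on $i$; hence there is no scalar common to all of row~$i$, and multilinearity does not directly hand you a linear combination of Wronskians for the shifted degree vectors. (Tellingly, you wrote ``three groups of terms'' but then described only two: the missing third group is exactly this $j$-dependent residue.)

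The paper handles this by first expanding $F_\lambda l_\lambda^{(\alpha)}$ as a sum over permutations and then, using an identity from~\cite[Lemma~5.4(4)]{Bonneux_Stevens}, rewriting it as a sum over $j=1,\dots,r$ so that the recurrence is applied to a single factor $l_{\sigma(n_\lambda)_j}^{(\alpha)}$ in each product. This produces a decomposition $A+B+C$: the pieces $A$ and $C$ yield the two sums in~\eqref{eq:WLrecurrence2} just as you anticipate, while the leftover piece $B$ carries the column dependence through a factor $(\sigma(j)-r)$. The nontrivial point your plan does not address is that $B=0$; in the paper this is established by a sign-reversing involution on pairs $(j,\sigma)$ with $\sigma(j)\neq r$, sending $(j,\sigma)$ to $(k,\sigma\,(j\,k))$ with $k=\sigma^{-1}(\sigma(j)+1)$. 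Without this cancellation (or an equivalent argument), the bookkeeping you describe cannot close, no matter how carefully the factorial and Vandermonde ratios are tracked.
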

Taking $\lambda=(n)$ in~\eqref{eq:WLrecurrence2} precisely yields the 3-term recurrence~\eqref{eq:Lrecurrence2}. This is in contrast with the recurrence relation obtained in~\cite{Bonneux_Hamaker_Stembridge_Stevens} for Wronskian Laguerre polynomials, which says
\begin{multline}\label{eq:WLrecurrence}
	F_\lambda l^{(\alpha)}_\lambda(x)
	= (x+\alpha) \sum_{\mu \lessdot \lambda} F_\mu l^{(\alpha)}_\mu(x) 
	+ \alpha \sum_{k=2}^{|\lambda|} (-1)^{k-1}\frac{(|\lambda|-1)!}{(|\lambda|-k)!} \sum_{\nu\in\mathcal{R}_k^{-}(\lambda)} (-1)^{\htt(\lambda/\nu)} F_\nu l^{(\alpha)}_\nu(x)
\end{multline}
for any partition $|\lambda|\geq1$. Taking $\lambda=(n)$ in this relation yields
\begin{equation*}
	l_n^{(\alpha)}(x)
	= (x+\alpha)l_{n-1}^{(\alpha)}(x)+ \alpha \sum_{k=2}^n (-1)^{k-1} \frac{(n-1)!}{(n-k)!} l_{n-k}^{(\alpha)}(x)
\end{equation*}
which is fundamentally different in shape from the 3-term recurrence~\eqref{eq:Lrecurrence2}, but nevertheless equivalent with it by using formulas (5.1.13) and (5.1.14) from~\cite{Szego}. It is noteworthy that there are less terms in the right-hand side of~\eqref{eq:WLrecurrence2} than in~\eqref{eq:WLrecurrence}.

\begin{remark}
	Both recurrence relations~\eqref{eq:WLrecurrence2} and~\eqref{eq:WLrecurrence} generate the set of Wronskian Laguerre polynomials together with the initial conditions $l_{\emptyset}^{(\alpha)}(x)=1$ and ${l_{(1)}^{(\alpha)}(x)=x+\alpha}$.
\end{remark}

\begin{remark}
Exceptional Laguerre polynomials~\cite{Bonneux_Kuijlaars,Duran-Laguerre} are constructed using Wronskians of classical Laguerre polynomials~$L_n^{(\alpha)}$ instead of the modified polynomials~$\ell_n^{(\alpha)}$. More concretely, they make use of the polynomials
\begin{equation}\label{eq:WL}
	L_{\lambda}^{(\alpha)} 
		= c_\lambda \Wr[L_{n_1}^{(\alpha)}, L_{n_2}^{(\alpha)}, \dots, L_{n_r}^{(\alpha)}]
\end{equation}
that are defined for every partition $\lambda$, with $c_\lambda$ being a normalization constant to obtain monic polynomials. Using modified polynomials (as in~\eqref{eq:WLP}) or classical Laguerre polynomials (as in~\eqref{eq:WL}) turns out to yield different Wronskian polynomials, to such an extent that the recurrence relations in Theorem~\ref{thm:WLrecurrence2} or in~\eqref{eq:WLrecurrence} do not translate to recurrence relations for exceptional Laguerre polynomials such as those obtained in~\cite[Corollary~5.1]{Duran-Recurrence}. Nevertheless, it is noteworthy that~\eqref{eq:WLP} and~\eqref{eq:WL} do relate for partitions whose Young diagrams are rectangles. More precisely, if $\lambda=(n,n,\dots,n)$ with $\ell(\lambda)=m$, then $L_{\lambda}^{(\alpha)}(x) = l_{\lambda'}^{(-\alpha-n)}(x)$ for any $\alpha$, where $\lambda'=(m,m,\dots,m)$ with $\ell(\lambda')=n$ is the conjugate partition of~$\lambda$.
\end{remark}

\begin{remark}
	The Hermite polynomials $(\He_n)_{n=0}^\infty$ are defined by
	\begin{equation}\label{eq:3termHermite}
		\He_n(x)
		=x\He_{n-1}(x) - (n-1) \He_{n-2}(x)
	\end{equation}
	for $n\geq 2$, together with $\He_0(x)=1$ and $\He_1(x)=x$, see~\cite[Chapter 9]{Jackson}. The Wronskian Hermite polynomials, defined by
	\begin{equation*}
		\He_\lambda = \frac{\Wr[\He_{n_1},\He_{n_2},\dots,\He_{n_r}]}{\prod_{i<j} (n_j-n_i)}
	\end{equation*} 
	satisfy
	\begin{equation}
		\label{eq:recurrenceWHP}
		F_\lambda \He_\lambda(x) = x \sum_{\mu \lessdot \lambda} F_\mu \He_\mu(x) - (\lvert \lambda \rvert -1) \sum_{\rho \in \mathcal{R}_2^-(\lambda)} (-1)^{\htt(\lambda/\rho)} F_\rho \He_\rho(x)
	\end{equation}
	for any non-empty partition $\lambda$, as was shown in~\cite{Bonneux_Stevens} by the direct computation of determinants. Notice the similarity between~\eqref{eq:WLrecurrence2} and~\eqref{eq:recurrenceWHP} and the fact that~\eqref{eq:recurrenceWHP} reduces to~\eqref{eq:3termHermite} when taking $\lambda=(n)$. The relation~\eqref{eq:recurrenceWHP} also follows by specifying the general recurrence relation for Wronskian Appell polynomials in~\cite{Bonneux_Hamaker_Stembridge_Stevens}, which can be done since the Hermite polynomials form an Appell sequence.
\end{remark}

\begin{remark}
	\label{rem:Jacobi}
	The Hermite and Laguerre polynomials are two types of classical orthogonal polynomials. The third (and last) type is formed by the class of Jacobi polynomials $P_n^{(\alpha,\beta)}$~\cite{Szego}, which (for fixed parameters $\alpha$ and $\beta$) do not satisfy the Appell property. However, assuming that $\alpha+\beta\not\in \mathbb{Z}_{\leq 0}$, the modified Jacobi polynomials
	\begin{equation*}
		A_n^{(\alpha,\beta)}(x)
		:= \frac{ 2^n n! }{(\alpha+\beta-n+1)_n} P^{(\alpha-n,\beta-n)}_n(x)
	\end{equation*} 
	where $(a)_n=a(a+1)\cdots(a+n-1)$ denotes the Pochhammer symbol, do satisfy the Appell property and their 3-term recurrence is 
	\begin{multline}\label{eq:Jrecurrence}
		(\alpha+\beta-n+1) A_{n}^{(\alpha,\beta)}(x)
		= \left((\alpha+\beta-2n+2)x+\alpha-\beta\right) A_{n-1}^{(\alpha,\beta)}(x) \\ + (x^2-1)(n-1) A_{n-2}^{(\alpha,\beta)}(x)
	\end{multline}
	for $n\geq 2$. Hence, by the general results in~\cite{Bonneux_Hamaker_Stembridge_Stevens}, there is a generating recurrence relation for the Wronskian Jacobi polynomials
	\[A_\lambda^{(\alpha,\beta)} = \frac{\Wr[A_{n_1}^{(\alpha,\beta)}, A_{n_2}^{(\alpha,\beta)}, \dots, A_{n_r}^{(\alpha,\beta)}]}{\prod_{i<j} (n_j-n_i)}\]
	but this relation is not of the form
	\begin{equation*}
		F_{\lambda} A_{\lambda}^{(\alpha,\beta)}(x)
		= \sum_{\mu \lessdot \lambda} (a_{\mu,\lambda} x + b_{\mu,\lambda}) F_{\mu} A_{\mu}^{(\alpha,\beta)}(x)
		+ \sum_{\rho \in \mathcal{R}_2^{-}(\lambda)} (c_{\rho,\lambda} x^2 + d_{\rho,\lambda} x + e_{\rho,\lambda}) F_{\rho} A_{\rho}^{(\alpha,\beta)}(x)	
	\end{equation*}
	where $a_{\mu,\lambda},b_{\mu,\lambda},c_{\rho,\lambda},d_{\rho,\lambda},e_{\rho,\lambda}\in\mathbb{R}$. In fact, an implementation in the computer software Maple shows that a recurrence relation of this form does not exist. The structure of the proof of the recurrence relation for Wronskian Hermite and Laguerre polynomials, as given in Section~\ref{sec:proof}, does not have an analogue for the Jacobi case. This is because both terms in the right-hand side of the 3-term recurrence relation~\eqref{eq:Jrecurrence} are polynomials of degree $n$, whereas there is only one such term in~\eqref{eq:Lrecurrence2} and~\eqref{eq:3termHermite}.
\end{remark}

\section{Proof of Theorem~\ref{thm:WLrecurrence2}}\label{sec:proof}
The proof of the recurrence relation~\eqref{eq:WLrecurrence2} for Wronskian Laguerre polynomials has the same structure and ideas as the proof of the recurrence relation for Wronskian Hermite polynomials which is given in~\cite[Section 5]{Bonneux_Stevens}. This structure has three parts. 
\begin{enumerate}
	\item The first part is a necessary rewriting exercise of the Wronskian polynomial and works for every Appell sequence.
	\item The rewriting exercise enables invoking the 3-term recurrence relation in the second part. Hence, the specifics of the second part depend on the sequence of polynomials.
	\item Invoking the 3-term recurrence relation yields a few terms. For the Hermite case, as treated in~\cite{Bonneux_Stevens}, there are two terms, which can be rewritten as a sum over the partitions $\mu \lessdot \lambda$ and $\rho \in \mathcal{R}_2^-(\lambda)$, respectively. In the Laguerre case, there are three terms; two of those are the analogous terms of the Hermite case, and the third one vanishes, as shown below. To build further upon Remark~\ref{rem:Jacobi}: completing part 1 and 2 for the Jacobi case yields terms which cannot be rewritten in any known combination of Wronskian Jacobi polynomials.
\end{enumerate}

\noindent \textbf{Part 1.}
The polynomial $F_\lambda l_\lambda^{(\alpha)}$ can be rewritten to make it suitable for invoking the 3-term recurrence relation in part 2. For this, fix a partition $\lambda=(\lambda_1,\lambda_2,\dots,\lambda_r)$ with degree vector $n_\lambda=(n_1,n_2,\dots,n_r)$. For any permutation $\sigma \in S_r$ and for any $i\in\{1,2,\dots,r\}$, write 
\begin{equation}
	\label{eq:sigmanlambda}
	\sigma(n_\lambda)_i = n_i-\sigma(i)+1
\end{equation} 
as in~\cite{Bonneux_Stevens}. Now, use~\eqref{eq:Flambda} for $F_\lambda$, evaluate the determinant in~\eqref{eq:WLP} as a sum over permutations, and use the Appell property~\eqref{eq:LAppell} repeatedly to arrive at
\begin{equation}\label{eq:proof1}
	F_{\lambda} l_{\lambda}^{(\alpha)}(x)=(-1)^{\frac{r(r-1)}{2}} |\lambda|! \sum_{\sigma\in S_r} \sgn(\sigma) \prod_{i=1}^{r} \frac{l_{\sigma(n_\lambda)_i}^{(\alpha)}(x)}{\sigma(n_\lambda)_i!}
\end{equation}
which is similar to~\cite[formula~(5.7)]{Bonneux_Stevens}. Subsequently, for $j=1,2,\dots,r$ write 
\begin{equation}
	\label{eq:nlambdaj}
	n_\lambda[j]=(n_1,\dots,n_{j-1},n_j-1,n_{j+1},\dots,n_r)
\end{equation}
as in~\cite{Bonneux_Stevens}. Then part 4 of Lemma 5.4 in~\cite{Bonneux_Stevens} says that
\begin{equation*}
	|\lambda|!  \prod_{i=1}^{r} \frac{1}{\sigma(n_\lambda)_i!}
	= \sum_{j=1}^{r}(|\lambda|-1)!  \prod_{i=1}^{r} \frac{1}{\sigma(n_\lambda[j])_i!}
\end{equation*}
which can be applied in~\eqref{eq:proof1}. It yields
\begin{equation}
	\label{eq:endpart1}
	F_{\lambda} l_{\lambda}^{(\alpha)}(x)
	= \sum_{j=1}^{r}(-1)^{\frac{r(r-1)}{2}}(|\lambda|-1)! \sum_{\sigma\in S_r} \sgn(\sigma) \prod_{i=1}^{r} \frac{l_{\sigma(n_\lambda)_i}^{(\alpha)}(x)}{\sigma(n_\lambda[j])_i!}
\end{equation}
which is the suitable form for invoking the 3-term recurrence relation in part~2. As mentioned before, so far, nothing depends on the specific case of Laguerre polynomials: any other Appell sequence satisfies the same equation. \\

\noindent \textbf{Part 2.}
The next step is to use the 3-term recurrence relation. More concretely, for each term in~\eqref{eq:endpart1}, the relation is applied to the $j^\textrm{th}$ factor in the product for every $\sigma \in S_r$. For this factor, the 3-term recurrence relation~\eqref{eq:Lrecurrence2} says
\begin{equation*}
	l_{\sigma(n_\lambda)_j}^{(\alpha)}(x)
	= (x+\alpha-\lambda_j+j+\sigma(j)-r) \, l_{\sigma(n_\lambda)_j-1}^{(\alpha)}(x)
	+ x(\sigma(n_\lambda)_j-1) \, l_{\sigma(n_\lambda)_j-2}^{(\alpha)}(x)
\end{equation*}
which follows from $\sigma(n_\lambda)_j=n_j-\sigma(j)+1$ and $n_j=\lambda_j+r-j$. Therefore
\begin{equation}\label{eq:proof3}
	F_{\lambda} l_{\lambda}^{(\alpha)}(x)
	= A + (-1)^{\frac{r(r-1)}{2}}(|\lambda|-1)!  B + C
\end{equation}
where
\begin{align}
	A	
	&= \sum_{j=1}^{r}(-1)^{\frac{r(r-1)}{2}}(|\lambda|-1)! \, (x+\alpha-\lambda_j+j) \sum_{\sigma\in S_r} \sgn(\sigma)  \frac{l^{(\alpha)}_{\sigma(n_\lambda)_j-1}(x) \prod_{i\neq j}l_{\sigma(n_\lambda)_i}^{(\alpha)}(x)}{\prod_{i}\sigma(n_\lambda[j])_i!} 
	\label{eq:proofA} \\
	B	
	&= \sum_{j=1}^{r} \sum_{\sigma\in S_r} \sgn(\sigma)  (\sigma(j)-r) \frac{l^{(\alpha)}_{\sigma(n_\lambda)_j-1}(x) \prod_{i\neq j}l_{\sigma(n_\lambda)_i}^{(\alpha)}(x)}{\prod_{i}\sigma(n_\lambda[j])_i!}
	\label{eq:proofB} \\
	C
	&= x\sum_{j=1}^{r}(-1)^{\frac{r(r-1)}{2}}(|\lambda|-1)!\sum_{\sigma\in S_r} \sgn(\sigma) (\sigma(n_\lambda)_j-1) \frac{l^{(\alpha)}_{\sigma(n_\lambda)_j-2}(x) \prod_{i\neq j}l_{\sigma(n_\lambda)_i}^{(\alpha)}(x)}{\prod_{i}\sigma(n_\lambda[j])_i!}	
	\label{eq:proofC}
\end{align}
and these terms can now be treated separately. \\

\noindent \textbf{Part 3.}
The decomposition~\eqref{eq:proof3} should be compared with Proposition 5.6 in~\cite{Bonneux_Stevens}: in the Hermite case analogues of the terms $A$ and $C$ exist, but not for the term $B$. In fact, completely similar as in the proof of~\cite[Theorem~3.1]{Bonneux_Stevens}, the terms $A$ and $C$, see~\eqref{eq:proofA} and~\eqref{eq:proofC}, are equal to
\begin{equation}\label{eq:proof4}
	A
	= \sum_{\mu \lessdot \lambda} (x+\alpha-c(\lambda/\mu)) F_{\mu} l_{\mu}^{(\alpha)}(x)
	\qquad \qquad
	C
	= x (|\lambda|-1) \sum_{\rho \in \mathcal{R}_2^{-}(\lambda)}(-1)^{\htt(\lambda/\rho)} F_{\rho} l_{\rho}^{(\alpha)}(x)
\end{equation}
and hence it is now sufficient to prove that $B=0$. For this, a symmetry argument that again does not depend on the specifics of the Laguerre polynomials suffices. Namely, define the set
\begin{equation*}
	X
	=\{(j,\sigma) \in \{1,2,\dots,r\}\times S_r \mid \sigma(j)\neq r \}
\end{equation*}
so that
\begin{equation}\label{eq:B2}
	B
	= \sum_{(j,\sigma)\in X} b_{j,\sigma}
\end{equation}
where $b_{j,\sigma}$ denotes the full expression of the term in the double sum in $B$, see~\eqref{eq:proofB}. Next, define the map
\begin{equation*}
	T: X \to X: (j,\sigma) \mapsto (k,\tau)
\end{equation*}
where $k=\sigma^{-1}(\sigma(j)+1)$ and $\tau= \sigma (j \, k)$. Then $\sgn(\sigma)=-\sgn(\tau)$ and $\sigma(j)=\tau(k)$, and therefore $T$ is well-defined. A direct computation yields $T\circ T=\Id_X$ so that $T$ is bijective, and moreover, $\sigma(n_\lambda[j])_i=\tau(n_\lambda[k])_i$ for all $i$, which follows directly from~\eqref{eq:sigmanlambda} and~\eqref{eq:nlambdaj}. Applying this to~\eqref{eq:B2} yields
\begin{equation*}
	2B
	= \sum_{(j,\sigma)\in X} \left( b_{j,\sigma} + b_{T(j,\sigma)} \right)
\end{equation*}
and so $B=0$ as all individual terms in the previous sum vanish because $b_{j,\sigma}=-b_{T(j,\sigma)}$ for all $(j,\sigma)\in X$. Hence, combining~\eqref{eq:proof3} and~\eqref{eq:proof4}, together with $B=0$, yields~\eqref{eq:WLrecurrence2}. \qed

\begin{remark} 
	In the above proof, the 3-term recurrence relation~\eqref{eq:Lrecurrence2} was invoked in part~2, to obtain the terms $A$, $B$ and $C$. A careful analysis of these terms in part~3 yields the recurrence relation~\eqref{eq:WLrecurrence2}. However, it is not necessary that the invoked recurrence relation is a 3-term recurrence; as long as in part~3 the resulting terms can be rewritten in terms of Wronskian polynomials of lower degrees, this technique yields a way to derive recurrence relations for Wronskian polynomials.
	
	It is in this way that the generating recurrence relation for Wronskian Appell polynomials, see~\cite[Theorem~6.3]{Bonneux_Hamaker_Stembridge_Stevens}, can be derived without using results of symmetric function theory. Namely, let $(A_n)_{n=0}^\infty$ be an Appell sequence, that is $A_0(x)=1$ and $A_n'(x)=nA_{n-1}(x)$ for all $n\geq1$. Then, the exponential generating function is of the form
	\begin{equation*}
		\sum_{n=0}^{\infty} A_n(x) \frac{t^n}{n!}
		= e^{xt} f_A(t)
	\end{equation*}
	for some formal power series $f_A$. Next, set 
	\begin{equation*}
		z_n
		:= A_n(0)
		\qquad \qquad
		\log(f_A(t))
		:= \sum_{n=1}^{\infty} c_n \frac{t^n}{n!}
	\end{equation*}
	such that
	\begin{equation}\label{eq:cnvszn}
		f_A(t)
		= \sum_{n=0}^{\infty} z_n \frac{t^n}{n!}
		\qquad \qquad
		z_n
		= c_n + \sum_{i=1}^{n-1} \binom{n-1}{i} c_{n-i} \, z_i
	\end{equation}
	see~\cite[Section 3.3]{Bonneux_Hamaker_Stembridge_Stevens}.
	
	\begin{lemma}\label{lem:ASrecurrence}
		Any Appell sequence $(A_n)_{n=0}^{\infty}$ can be generated by
		\begin{equation}\label{eq:ASrecurrence}
			A_n(x)
			= x A_{n-1}(x)
			+ \sum_{k=1}^{n} \binom{n-1}{k-1} c_k \, A_{n-k}(x)
		\end{equation}
		for $n\geq1$, along with the initial condition $A_0(x)=1$.
	\end{lemma}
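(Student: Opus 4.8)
The plan is to verify the recurrence~\eqref{eq:ASrecurrence} by a single formal manipulation of the exponential generating function $G(x,t):=\sum_{n\geq 0}A_n(x)\,t^n/n!=e^{xt}f_A(t)$. Since~\eqref{eq:ASrecurrence} expresses $A_n$ in terms of $A_0,\dots,A_{n-1}$ and the initial condition $A_0(x)=1$ is exactly $z_0=A_0(0)=1$, the sequence generated by~\eqref{eq:ASrecurrence} is uniquely determined; hence it suffices to check that the genuine Appell polynomials $A_n$ satisfy~\eqref{eq:ASrecurrence} for every $n\geq1$. Note also that $f_A$ is invertible as a formal power series because $f_A(0)=z_0=1$, so $f_A'(t)/f_A(t)$ is a well-defined formal power series.

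First I would multiply~\eqref{eq:ASrecurrence} by $t^{n-1}/(n-1)!$ and sum over $n\geq1$. The left-hand side yields $\partial_t G(x,t)$. On the right-hand side, the term $xA_{n-1}(x)$ contributes $x\,G(x,t)$. For the double sum, writing $\binom{n-1}{k-1}=(n-1)!/((k-1)!(n-k)!)$ turns the general term into $\dfrac{c_k}{(k-1)!}t^{k-1}\cdot\dfrac{A_{n-k}(x)}{(n-k)!}t^{\,(n-k)}\big/t$; substituting $m=n-k\geq0$ identifies the double sum as the Cauchy product $\Bigl(\sum_{k\geq1}\dfrac{c_k\,t^{k-1}}{(k-1)!}\Bigr)\cdot G(x,t)$. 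Since $\log f_A(t)=\sum_{n\geq1}c_n t^n/n!$ by definition, the first factor is $\dfrac{d}{dt}\log f_A(t)=\dfrac{f_A'(t)}{f_A(t)}$, so the whole right-hand side equals $\Bigl(x+\dfrac{f_A'(t)}{f_A(t)}\Bigr)G(x,t)$.

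To finish, compute the left-hand side directly: $\partial_t G=\partial_t\bigl(e^{xt}f_A(t)\bigr)=e^{xt}\bigl(xf_A(t)+f_A'(t)\bigr)=\Bigl(x+\dfrac{f_A'(t)}{f_A(t)}\Bigr)G(x,t)$, which coincides with the right-hand side. Comparing the coefficient of $t^{n-1}$ on both sides gives~\eqref{eq:ASrecurrence} for all $n\geq1$, and the case $n=0$ is the stated initial condition. There is no real obstacle here: the only things to keep track of are the invertibility of $f_A$ and the bookkeeping in the index shift $m=n-k$ of the Cauchy product.

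If a generating-function argument is undesirable, an equivalent purely algebraic route is available: expand $A_n(x)=\sum_{j=0}^n\binom{n}{j}z_{n-j}x^j$ (the coefficient form of $G=e^{xt}f_A$), apply Pascal's rule $\binom{n}{j}-\binom{n-1}{j-1}=\binom{n-1}{j}$ to $A_n(x)-xA_{n-1}(x)$, and then reduce the claimed identity coefficient-of-$x^j$ by coefficient-of-$x^j$ to the recursion $z_N=\sum_{k=1}^N\binom{N-1}{k-1}c_k z_{N-k}$ (a reindexing of the second identity in~\eqref{eq:cnvszn} using $z_0=1$), the passage being the elementary identity $\binom{n-1}{N-1}\binom{N-1}{k-1}=\binom{n-1}{k-1}\binom{n-k}{N-k}$ with $N=n-j$. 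Either presentation completes the proof.
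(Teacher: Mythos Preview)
Your proof is correct, but the route differs from the paper's. The paper argues in two lines: first, setting $x=0$ in~\eqref{eq:ASrecurrence} collapses it to the known relation~\eqref{eq:cnvszn} between the $z_n$ and the $c_n$, so the constant terms agree; second, differentiating both sides and using the Appell property $A_n'=nA_{n-1}$ reduces~\eqref{eq:ASrecurrence} at level $n$ to the same identity at level $n-1$, so induction finishes it. Your main argument instead packages all $n$ simultaneously into the single identity $\partial_t G=(x+f_A'/f_A)G$, which is immediate from $G=e^{xt}f_A(t)$; this avoids induction entirely and makes the role of $\log f_A$ transparent, at the price of a little formal-power-series bookkeeping. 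Your alternative coefficient-of-$x^j$ route is closer in spirit to the paper's, since both ultimately rest on~\eqref{eq:cnvszn}, but the paper reaches it via the constant-term/derivative split rather than a direct binomial manipulation.
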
 
	\begin{proof}
		Setting $x=0$ in~\eqref{eq:ASrecurrence} yields, after some elementary rewriting, the expression given in~\eqref{eq:cnvszn}. Therefore, it is sufficient to show that the polynomials on both sides of the equality~\eqref{eq:ASrecurrence} have the same derivative. Approaching by induction on $n$ and using the Appell property, this yields an easy exercise.
	\end{proof} 
\end{remark}
Invoking the recurrence relation~\eqref{eq:ASrecurrence} in part 2 of the above structure gives $n+1$ terms, which can be rewritten to the terms that appear in~\cite[Theorem~6.3]{Bonneux_Hamaker_Stembridge_Stevens}. Since the details are similar as those in the proof of Theorem~3.1 in~\cite{Bonneux_Stevens}, they are left out.

\section{Average Wronskian Laguerre polynomial}\label{sec:average}
Wronskian Appell polynomials were constructed in~\cite{Bonneux_Hamaker_Stembridge_Stevens} for any Appell sequence. Since the sequence of (modified) Laguerre polynomials is an Appell sequence by~\eqref{eq:LAppell}, the results obtained there also hold specifically for Wronskian Laguerre polynomials as stated in~\cite[Section 7.3]{Bonneux_Hamaker_Stembridge_Stevens}. For example, for any $n\geq 0$,
\begin{equation}\label{eq:average}
	\sum_{\lambda\vdash n} 	\frac{F_{\lambda}^2}{n!} l_{\lambda}^{(\alpha)}(x)=(x+\alpha)^n
\end{equation}
which describes the average Wronskian Laguerre polynomial of degree $n$ with respect to the Plancherel measure~\cite[Definition 1.5]{Baik_Deift_Suidan}, and follows from the weighted average property for Schur functions~\cite[Corollary~7.12.5]{Stanley_EC2}. As shown below, this result can also be proven by induction on $n$ using the recurrence relation~\eqref{eq:WLrecurrence2}. This is analogous to the proof of the averaging result for Wronskian Hermite polynomials in~\cite[Theorem~3.4]{Bonneux_Stevens}.

\begin{proof}[Proof of~\eqref{eq:average}]
	By induction on $n:=|\lambda|$. For $n=0$ or $n=1$, the result is trivial because the average is taken of only 1 polynomial. Therefore, suppose that $n > 1$ and assume that the statement is true for $n-1$. Applying~\eqref{eq:WLrecurrence2} on the left-hand side of~\eqref{eq:average} yields
	\begin{multline}
		\label{eq:tripledoublesumaverage}
		\sum_{\lambda\vdash n} 	\frac{F_{\lambda}^2}{n!} l_{\lambda}^{(\alpha)}(x)
		= 
		\frac{x+\alpha}{n!} \sum_{\lambda\vdash n}\sum_{\mu \lessdot \lambda} F_{\lambda} F_{\mu} l_{\mu}^{(\alpha)}(x)
		-
		\frac{1}{n!}\sum_{\lambda\vdash n}\sum_{\mu \lessdot \lambda} c(\lambda/\mu) F_{\lambda} F_{\mu} l_{\mu}^{(\alpha)}(x)
		\\ +
		\frac{x (|\lambda|-1)}{n!} \sum_{\lambda\vdash n} \sum_{\rho \in \mathcal{R}_2^{-}(\lambda)}(-1)^{\htt(\lambda/\rho)} F_{\lambda} F_{\rho}  l_{\rho}^{(\alpha)}(x)
	\end{multline}
	where the first term of the recurrence relation~\eqref{eq:WLrecurrence2} is separated into two parts. Next, consider each double sum separately. Interchanging sums leads to the equalities
	\begin{align*}
		\sum_{\lambda\vdash n}\sum_{\mu \lessdot \lambda} F_{\lambda} F_{\mu} l_{\mu}^{(\alpha)}(x)
		& = \sum_{\mu\vdash n-1} F_{\mu} l_{\mu}^{(\alpha)}(x) \sum_{\lambda \gtrdot \mu} F_{\lambda} \\
		\sum_{\lambda\vdash n}\sum_{\mu \lessdot \lambda} c(\lambda/\mu) F_{\lambda} F_{\mu} l_{\mu}^{(\alpha)}(x)
		&= \sum_{\mu\vdash n-1} F_{\mu} l_{\mu}^{(\alpha)}(x) \sum_{\lambda \gtrdot \mu} F_{\lambda} \, c(\lambda/\mu)   \\
		\sum_{\lambda\vdash n} \sum_{\rho \in \mathcal{R}_2^{-}(\lambda)}(-1)^{\htt(\lambda/\rho)} F_{\rho} F_{\lambda} l_{\rho}^{(\alpha)}(x)
		&=  \sum_{\rho\vdash n-2} F_{\rho} l_{\rho}^{(\alpha)}(x) \sum_{\lambda \in \mathcal{R}_2^{+}(\rho)}(-1)^{\htt(\lambda/\rho)} F_{\lambda} 	
	\end{align*}
	where for each equality, the last sum can be calculated explicitly. Namely, Lemma 7.1 and Lemma 7.2 in~\cite{Bonneux_Stevens} state that
	\begin{equation*}
		\sum_{\lambda \gtrdot \mu} F_{\lambda}
		= (|\mu|+1) F_{\mu}
		\qquad 
		\qquad
		\sum_{\lambda \in \mathcal{R}_2^{+}(\rho)}(-1)^{\htt(\lambda/\rho)} F_{\lambda} 	
		= 0	
	\end{equation*}
	while Proposition~\ref{prop:technicalidentity} below shows that
	\begin{equation*}
		\sum_{\lambda \gtrdot \mu} F_{\lambda} \, c(\lambda/\mu)
		= 0
	\end{equation*}
	whence only the first double sum in~\eqref{eq:tripledoublesumaverage} does not vanish. So
	\begin{equation*}
		\sum_{\lambda\vdash n} 	\frac{F_{\lambda}^2}{n!} l_{\lambda}^{(\alpha)}(x)
		= (x+\alpha) \sum_{\mu\vdash n-1} \frac{F_{\mu}^2}{(n-1)!} l_{\mu}^{(\alpha)}(x)
		= (x+\alpha)^n
	\end{equation*}
	where the last equality is obtained by the induction hypothesis. This ends the proof.
\end{proof}

The following lemma is used in Proposition~\ref{prop:technicalidentity}.

\begin{lemma}\label{lem:technicalidentity}
	Let $\mu$ be a partition. Then
	\begin{equation}\label{eq:technicalidentity}
		\sum_{\lambda \gtrdot \mu} c(\lambda/\mu)
		= \sum_{\rho \lessdot \mu} c(\mu/\rho)
	\end{equation}
	where $c(\lambda /\mu)$ is defined in~\eqref{eq:defcontent}.
\end{lemma}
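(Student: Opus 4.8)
The plan is to prove Lemma~\ref{lem:technicalidentity} by translating both sides into statements about the cells that can be added to, or removed from, the Young diagram of $\mu$, and then exhibiting a cancellation. Recall that $\lambda \gtrdot \mu$ means $\lambda$ is obtained from $\mu$ by adding a single box in an \emph{addable corner}, and $\rho \lessdot \mu$ means $\rho$ is obtained by removing a box from a \emph{removable corner}. If the added (resp.\ removed) box sits in row $i$, then by~\eqref{eq:defcontent} its content is $\mu_i - i + 1$ (resp.\ $\mu_i - i$), since in the larger partition that row has length $\mu_i+1$ (resp.\ $\mu_i$). So the left-hand side of~\eqref{eq:technicalidentity} is $\sum_{\text{addable } i}(\mu_i - i + 1)$ and the right-hand side is $\sum_{\text{removable } i}(\mu_i - i)$; the claim is that these two sums are equal.

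The key combinatorial fact I would use is the standard interlacing of addable and removable corners along the boundary path of $\mu$: reading the boundary of the Young diagram from top-right to bottom-left, addable and removable corners alternate, beginning and ending with an addable corner. Concretely, if $a_0 > a_1 > \dots > a_m$ are the diagonals (i.e.\ values of $\text{column} - \text{row}$) of the addable corners and $b_1 > b_2 > \dots > b_m$ are those of the removable corners, then $a_0 > b_1 > a_1 > b_2 > \dots > b_m > a_m$. The diagonal value of a box in row $i$ of a partition is exactly its content; so if I index the addable corner in row $i$ by its content $\alpha_i := \mu_i - i + 1$ and a removable corner in row $i$ by its content $\beta_i := \mu_i - i$, the interlacing says these $2m+1$ numbers alternate strictly. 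Now the crucial observation is that an addable corner at diagonal $d$ and a removable corner at diagonal $d$ cannot both occur (the diagonals are distinct by interlacing), and more usefully: the multiset of addable-corner diagonals and the multiset of removable-corner diagonals, when one writes $a_k = b_k + (\text{something})$ does \emph{not} immediately telescope — so instead I would argue directly that $\sum_k a_k - \sum_k b_k$ equals the number of addable corners, which is $m+1$. Wait: that gives $\sum(\mu_i-i+1)_{\text{add}} - \sum(\mu_i-i)_{\text{rem}} = \sum a_k - \sum b_k$, and I need this to be $0$, not $m+1$; the "$+1$" per addable corner has already been absorbed into $a_k$ versus $\beta$. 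So the real content of the lemma is the identity $\sum_{k=0}^m a_k = \sum_{k=1}^m b_k$ for the interlacing sequences — but that is false in general (there is one more $a$ than $b$). Hence I must be more careful: the correct statement is that $\sum a_k - \sum b_k$ is NOT zero for arbitrary interlacing sequences, so the proof must exploit that the $a_k,b_k$ are not arbitrary but are literally the contents $\mu_i-i+1$ and $\mu_i-i$ coming from the \emph{same} rows $i$, with the addable set being $\{i : \mu_{i-1} > \mu_i\} \cup \{r+1\}$ and the removable set being $\{i : \mu_i > \mu_{i+1}\}$.

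So the cleaner route, which I would adopt, is a direct bijective/telescoping argument on rows. Pad $\mu = (\mu_1,\dots,\mu_r)$ with $\mu_0 = +\infty$ formally and $\mu_{r+1}=0$, and observe: row $i$ (for $1\le i\le r$) is a removable corner iff $\mu_i > \mu_{i+1}$, contributing $\mu_i - i$; row $i$ (for $1\le i \le r+1$, reading $\mu_{r+1}=0$) is addable iff $\mu_{i-1} > \mu_i$, contributing $\mu_i - i + 1$ (with $\mu_{r+1}-(r+1)+1 = -r$ for the bottom addable corner, and also a possible addable corner extending row $i$ when $\mu_{i-1}>\mu_i$ whose content is $\mu_i-i+1$). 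I would then write the difference $\sum_{\text{add}}(\mu_i-i+1) - \sum_{\text{rem}}(\mu_j-j)$ and reorganize the sum according to the \emph{distinct values} among $\mu_1,\dots,\mu_r$: each maximal block of equal parts $\mu_{p} = \mu_{p+1} = \dots = \mu_{q}$ of common value $v$ contributes exactly one removable corner (at row $q$, content $v-q$) and exactly one addable corner associated to that value (at row $p$, content $v-p+1$ — namely the box that would extend this block). Pairing each such removable corner with the addable corner of the block \emph{immediately below it} (or, for the last block, with the bottom corner at row $r+1$), the contents telescope: the key identity per adjacent pair of blocks is that the removable content of the upper block plus $1$ equals the addable content at the top of the lower block shifted appropriately. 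I expect the main obstacle to be bookkeeping the boundary cases cleanly — the topmost addable corner (adding a box to row $1$, content $\mu_1$) and the bottommost addable corner (new row $r+1$, content $-r$) — and verifying that after the telescoping these two "loose ends" exactly cancel. Once the indexing is set up with the padding conventions above, this reduces to a one-line telescoping sum, so the real work is purely in choosing the right pairing and dispatching the two extreme corners.
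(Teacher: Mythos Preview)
Your final approach---decompose $\mu$ into its maximal blocks of equal parts, list the addable and removable corners together with their contents, and check that the two sums agree---is exactly the paper's proof. The paper skips your interlacing detour and your imprecise ``pairing/telescoping'' description: it simply writes the removable side as $\sum_{t=1}^m\bigl(\mu_t-\sum_{s\le t}k_s\bigr)$ and the addable side as $\sum_{t=1}^m\bigl(\mu_t-\sum_{s<t}k_s\bigr)-\sum_{s=1}^m k_s$ and observes these coincide, which is what your ``one-line telescoping sum'' would become once you actually carry it out.
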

\begin{proof}
	Write $\mu=(\mu_1^{k_1},\mu_2^{k_2},\dots,\mu_m^{k_m})$, meaning that $\mu_t$ is repeated $k_t$ times for ${t=1,2,\dots,m}$, such that $\mu_1>\mu_2>\cdots>\mu_m>0$. For every $t$, the last box of the last row with $\mu_t$ elements can be removed to obtain a partition $\rho \lessdot \mu$. The content of that box is $\mu_t - \sum_{s=1}^t k_s$ by~\eqref{eq:defcontent}, such that
	\begin{equation}\label{eq:rhs}
		\sum_{\rho \lessdot \mu} c(\mu/\rho)
		= \sum_{t=1}^{m} \left(\mu_t - \sum_{s=1}^{t} k_s  \right)
	\end{equation}
	since these $m$ boxes are the only boxes that can be removed to yield a $\rho \lessdot \mu$. Similarly, the left-hand side of~\eqref{eq:technicalidentity} has $m+1$ terms. Namely, for every $t=1,2,\dots,m$, a box can be added to the first row of length $\mu_t$, and also a new row can be created at the end. Therefore, 
	\begin{equation}\label{eq:lhs}
		\sum_{\lambda \gtrdot \mu} c(\lambda/\mu)
		= \sum_{t=1}^{m} \left((\mu_t+1) - \left(1+ \sum_{s=1}^{t-1} k_s\right) \right) +1 -\left(1+\sum_{s=1}^{t} k_s\right)
	\end{equation}
	which follows directly from working out the contents of the boxes that are involved. Finally, it is straightforward to observe that the right-hand sides of~\eqref{eq:rhs} and~\eqref{eq:lhs} coincide and hence~\eqref{eq:technicalidentity} is established.
\end{proof}

\begin{proposition}\label{prop:technicalidentity}
	Let $\mu$ be a partition. Then
	\begin{equation}\label{eq:corollary}
		\sum_{\lambda \gtrdot \mu} F_{\lambda} \, c(\lambda/\mu) 
		= 0
	\end{equation}
	where $c(\lambda/\mu)$ is defined by~\eqref{eq:defcontent}.
\end{proposition}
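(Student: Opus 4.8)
The plan is to prove~\eqref{eq:corollary} by induction on $|\mu|$, the point being to rewrite the weighted sum over the partitions covering $\mu$ as a sum of the same quantities over the partitions covered by $\mu$. Write
\[
	G(\mu) := \sum_{\lambda\gtrdot\mu} F_\lambda\, c(\lambda/\mu)
\]
for the left-hand side of~\eqref{eq:corollary}; I will show that $G(\mu)=\sum_{\nu\lessdot\mu}G(\nu)$ for every non-empty partition $\mu$, after which the induction is immediate.

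To establish this, I would substitute the elementary recursion $F_\lambda=\sum_{\mu'\lessdot\lambda}F_{\mu'}$, which holds because $F_\lambda$ counts directed paths $\emptyset\to\lambda$ in the Young lattice and these are classified by their last step. This turns $G(\mu)$ into a sum over triples $\mu\lessdot\lambda\gtrdot\mu'$ of the term $c(\lambda/\mu)\,F_{\mu'}$. The triples with $\mu'=\mu$ contribute $F_\mu\sum_{\lambda\gtrdot\mu}c(\lambda/\mu)$. For the triples with $\mu'\neq\mu$ I would invoke the diamond property of the Young lattice: if $\mu\neq\mu'$ are both covered by $\lambda$, then $\nu:=\mu\cap\mu'$ is covered by both $\mu$ and $\mu'$ and $\lambda=\mu\cup\mu'$; this yields a bijection between these triples and pairs $(\nu,\mu')$ with $\nu\lessdot\mu$, $\nu\lessdot\mu'$, $\mu'\neq\mu$, under which the single box $\lambda/\mu$ is literally the same box as $\mu'/\nu$, so that $c(\lambda/\mu)=c(\mu'/\nu)$. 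Hence the $\mu'\neq\mu$ part equals $\sum_{\nu\lessdot\mu}\bigl(\sum_{\mu'\gtrdot\nu}c(\mu'/\nu)F_{\mu'}-c(\mu/\nu)F_\mu\bigr)=\sum_{\nu\lessdot\mu}\bigl(G(\nu)-c(\mu/\nu)F_\mu\bigr)$, and collecting the two parts gives
\[
	G(\mu)=F_\mu\Bigl(\,\sum_{\lambda\gtrdot\mu}c(\lambda/\mu)-\sum_{\nu\lessdot\mu}c(\mu/\nu)\,\Bigr)+\sum_{\nu\lessdot\mu}G(\nu).
\]
The bracketed difference vanishes by Lemma~\ref{lem:technicalidentity}, so $G(\mu)=\sum_{\nu\lessdot\mu}G(\nu)$, as claimed.

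The induction then closes at once: $G(\emptyset)=0$ because the only partition covering $\emptyset$ is $(1)$, whose unique box has content $0$; and if $G(\nu)=0$ whenever $|\nu|<|\mu|$, then $G(\mu)=\sum_{\nu\lessdot\mu}G(\nu)=0$ since each such $\nu$ has $|\nu|=|\mu|-1$. The only delicate point — and the step I expect to take the most care — is the bookkeeping for the $\mu'\neq\mu$ triples: one must verify that the diamond $\nu=\mu\cap\mu'$, $\lambda=\mu\cup\mu'$ genuinely exists and is unique (so that $(\nu,\mu')\mapsto(\mu\cup\mu',\mu')$ is a true inverse), and that the box added on the $\lambda/\mu$ side is exactly the box added on the $\mu'/\nu$ side, which is precisely what makes the contents agree and what lets Lemma~\ref{lem:technicalidentity} do its job. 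Everything else is formal, and a check on a small case such as $\mu=(2,1)$ — whose covers $(3,1),(2,2),(2,1,1)$ give $3\cdot 2+2\cdot 0+3\cdot(-2)=0$ — confirms the signs.
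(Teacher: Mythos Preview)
Your argument is correct and is essentially the same as the paper's: both proceed by induction on $|\mu|$, expand $F_\lambda=\sum_{\mu'\lessdot\lambda}F_{\mu'}$, split off the $\mu'=\mu$ terms, use the diamond (1-differential poset) property of the Young lattice to match the $\mu'\neq\mu$ triples with pairs $(\nu,\mu')$ where $\nu\lessdot\mu$, and invoke Lemma~\ref{lem:technicalidentity} to kill the residual content sum, arriving at $G(\mu)=\sum_{\nu\lessdot\mu}G(\nu)$. The paper writes $\gamma$ for your $\mu'$, $\rho$ for your $\nu$, and $\gamma\vee\mu$, $\mu\wedge\gamma$ for your $\mu\cup\mu'$, $\mu\cap\mu'$, but the structure and the key steps are identical.
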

\begin{proof}
	The proof is by induction on $n:=|\mu|$. If $n=0$, then $\mu=\emptyset$ and the sum in~\eqref{eq:corollary} only consists of the term $\lambda=(1)$ and hence the result is trivial. Therefore, let $n>0$ and assume that the result holds for all partitions of size $k<n$. Note that
	\begin{equation*}
		\sum_{\lambda \gtrdot \mu} F_{\lambda} \, c(\lambda/\mu) 
		= \sum_{\lambda \gtrdot \mu} \sum_{\gamma \lessdot \lambda} F_{\gamma} \, c(\lambda/\mu)
	\end{equation*}
	since $F_\lambda = \sum_{\gamma \lessdot \lambda} F_\gamma$. The claim is now that
	\begin{equation}\label{eq:proofcorollary1}
		\sum_{\lambda \gtrdot \mu} \sum_{\gamma \lessdot \lambda} F_{\gamma} \, c(\lambda/\mu)
		= \sum_{\rho \lessdot \mu} \sum_{\gamma \gtrdot \rho}  F_{\gamma} \, c(\gamma/\rho)
	\end{equation}
	and hence, by applying the induction hypothesis for every $\rho \lessdot \mu$ in the last sum, the right-hand side vanishes. This then establishes that~\eqref{eq:corollary} holds. To prove~\eqref{eq:proofcorollary1}, define for any partition $\mu$ the set
	\begin{equation*}
		S(\mu)
		:= \{\gamma \vdash |\mu| \mid \exists \lambda \gtrdot \mu \text{ such that } \gamma \lessdot \lambda \text{ and } \gamma \neq \mu\}
	\end{equation*}
	and for any two partitions $\gamma$ and $\nu$ define the partition $\gamma\wedge\nu$ (respectively $\gamma\vee\nu$) identified with the intersection (respectively union) of both Young diagrams of~$\gamma$ and~$\nu$. Then the left-hand side of~\eqref{eq:proofcorollary1} is
	\begin{equation}\label{eq:proofcorollary2}
		\sum_{\gamma\in S(\mu)} F_\gamma \, c\big((\gamma\vee\mu)/\mu\big) + F_{\mu} \sum_{\lambda \gtrdot \mu} \, c(\lambda/\mu)
	\end{equation}
	whereas the right-hand side equals
	\begin{equation}\label{eq:proofcorollary3}
		\sum_{\gamma\in S(\mu)} F_\gamma \, c\big(\gamma/(\mu\wedge\gamma)\big) + F_{\mu} \sum_{\rho \lessdot \mu} \, c(\mu/\rho) 
	\end{equation}
	because $S(\mu)$ can alternatively be written as
	\[S(\mu)= \{\gamma \vdash |\mu| \mid \exists \rho \lessdot \mu \text{ such that } \gamma \gtrdot \rho \text{ and } \gamma \neq \mu\}\]
	since the Young lattice is a 1-differential poset~\cite{Stanley_DiffPos}. By Lemma~\ref{lem:technicalidentity}, the last sum in expressions~\eqref{eq:proofcorollary2} and~\eqref{eq:proofcorollary3} coincide. Moreover, for any $\gamma \in S(\mu)$, the skew partitions $(\gamma\vee\mu)/\mu$ and $\gamma/(\mu\wedge\gamma)$ are the same, and hence the first sums in~\eqref{eq:proofcorollary2} and~\eqref{eq:proofcorollary3} are equal term by term. This establishes identity~\eqref{eq:proofcorollary1} and therefore ends the proof.
\end{proof}

\begin{remark}
	The identity in Proposition~\ref{prop:technicalidentity} can be expressed in terms of the degree vector via~\eqref{eq:Flambda} and $\lambda_i=n_i-r+i$ for $i=1,2,\dots,r$. Simplifying the equation yields the identity
	\begin{equation*}
		\sum_{k=1}^{r} \frac{n_k+1-r}{n_k+1}\prod_{j\neq k} \frac{n_k+1-n_j}{n_k-n_j}
		= r \prod_{k=1}^{r}\frac{n_k}{n_k+1}
	\end{equation*}
	which can be proven by induction on $r$, i.e., the number of elements, and using the same ideas as the proof of Lemma 8 in~\cite{Bonneux_Kuijlaars}. This gives an alternative way to prove identity~\eqref{eq:corollary}.
\end{remark}

\section*{Acknowledgements}
The authors thank Zachary Hamaker and John Stembridge for valuable discussions and Arno Kuijlaars for carefully reading the manuscript. They are supported in part by the long term structural funding-Methusalem grant of the Flemish Government, and by EOS project 30889451 of the Flemish Science Foundation (FWO). Marco Stevens is also supported by the Belgian Interuniversity Attraction Pole P07/18, and by FWO research grant G.0864.16.


\begin{thebibliography}{99}
\footnotesize\itemsep=0pt

\bibitem{Baik_Deift_Suidan}
Baik J., Deift P., Suidan T., 
Combinatorics and random matrix theory,
\href{https://doi.org/10.1090/gsm/172}{\emph{Graduate Studies in Mathematics}}, Volume~172, American Mathematical Society, Providence, Rhode Island, 2016.
	
\bibitem{Bonneux_Hamaker_Stembridge_Stevens}
Bonneux N., Hamaker Z., Stembridge J., Stevens M.,
Wronskian Appell polynomials and symmetric functions,
\href{https://doi.org/10.1016/j.aam.2019.101932}{\emph{Advances in Applied Mathematics}} \textbf{111} (2019), 101932.
	
\bibitem{Bonneux_Kuijlaars} 
Bonneux N., Kuijlaars A.B.J.,
Exceptional Laguerre polynomials,
\href{https://doi.org/10.1111/sapm.12204}{\emph{Studies in Applied Mathematics}} \textbf{141} (2018), 547--595.

\bibitem{Bonneux_Stevens} 
Bonneux N., Stevens M.,
Recurrence relations for Wronskian Hermite polynomials,
\href{https://doi.org/10.3842/SIGMA.2018.048}{\emph{Symmetry Integrability and Geometry: Methods and Applications}} \textbf{14} (2018), 048.

\bibitem{Clarkson-survey}
Clarkson P.A., 
Special polynomials associated with rational solutions of the Painlev\'e equations and applications to soliton equations, 
\href{https://doi.org/10.1007/BF03321618}{\emph{Computational Methods and Function Theory}} \textbf{6} (2006), 329--401.	

\bibitem{Duran-Laguerre} 
Dur\'an A.J.,
Exceptional Meixner and Laguerre orthogonal polynomials,
\href{https://doi.org/10.1016/j.jat.2014.05.009}{\emph{Journal of Approximation Theory}} \textbf{184} (2014), 176--208.

\bibitem{Duran-Recurrence}
Dur\'an A.J., 
Higher order recurrence relation for exceptional Charlier, Meixner, Hermite and Laguerre orthogonal polynomials, 
\href{https://doi.org/10.1080/10652469.2015.1009455}{\emph{Integral Transforms and Special Functions}} \textbf{26} (2015), 357--376.


\bibitem{GomezUllate_Grandati_Milson}
G\'omez-Ullate D., Grandati Y., Milson R., 
Rational extensions of the quantum harmonic oscillator and exceptional Hermite polynomials,
\href{https://doi.org/10.1088/1751-8113/47/1/015203}{\emph{Journal of Physics A: Mathematical and Theoretical}} \textbf{47} (2013), 015203.

\bibitem{Jackson}
Jackson D.,
Fourier series and orthogonal polynomials, 
\href{https://doi.org/10.5948/UPO9781614440062}{\emph{Carus Mathematical Monographs}}, Volume~6, Mathematical Association of America, Dover publications, Mineola, New York, 1941.

\bibitem{Macdonald}
Macdonald I.G.,
Symmetric functions and Hall polynomials,
\emph{Oxford Mathematical Monographs}, Oxford University Press, Second edition, New York, 1995.

\bibitem{Stanley_DiffPos} 
Stanley R.P.,
Differential posets,
\href{https://www.jstor.org/stable/1990995}{\emph{Journal of the American Mathematical Society}} \textbf{1} (1988), 919--961.

\bibitem{Stanley_EC2}
Stanley R.P.,
Enumerative combinatorics,
\emph{Cambridge Studies in Advanced~Mathematics}, Volume~2, Cambridge University Press, Cambridge, 1999.

\bibitem{Szego}
Szeg\H{o} G., 
Orthogonal polynomials, 
\emph{American Mathematical Society, Colloquium Publications}, Volume~23, 4th~edition, American Mathematical Society, Providence, Rhode Island, 1975.

\bibitem{VanAssche}
Van Assche W., 
Orthogonal polynomials and Painlev\'e equations, 
\href{https://doi.org/10.1017/9781108644860}{\emph{Australian Mathematical Society Lecture Series}}, Volume~27, Cambridge University Press, Cambridge, 2018.

\end{thebibliography}
\end{document}